 \newtheorem{definition}{Definition}[section]
\newtheorem{theorem}[definition]{Theorem}
\newtheorem{lemma}[definition]{Lemma}
\newtheorem{corollary}[definition]{Corollary}
\newtheorem{Observation}[definition]{Observation}
\journal{Discrete Applied Mathematics}
\begin{document}

\begin{frontmatter}
\title{On chordal and perfect plane near-triangulations}
%\title{A local characterization for perfect plane near-triangulations\tnoteref{mytitlenote}}
%\tnotetext[mytitlenote]{Fully documented templates are available in the elsarticle package on \href{http://www.ctan.org/tex-archive/macros/latex/contrib/elsarticle}{CTAN}.}

%% Group authors per affiliation:
%\author{Sameera Muhamed Salam \fnref{myfootnote}}
%\author{Sameera Muhamed Salam}
%\address{Department of Computer Science, NIT Calicut}
%\author{Jasine Babu}
%\address{Department of Computer Science, IIT Palakad}
%\author{K Murali Krishnan}
%\address{Department of Computer Science, NIT Calicut}
%\fntext[myfootnote]{Since 1880.}

%% or include affiliations in footnotes:
\author{Sameera M. Salam}
\ead{shemi.nazir@gmail.com}
\author{Nandini J. Warrier}
\ead{nandini.wj@gmail.com}
\author{Daphna Chacko}
\ead{daphna.chacko@gmail.com}
%\author[mysecondaryaddress]{Jasine Babu\corref{mycorrespondingauthor}}
%\cortext[mycorrespondingauthor]{Corresponding author}
%\author[mysecondaryaddress]{Jasine Babu}
%\cortext[mycorrespondingauthor]{Corresponding author}
%\ead{jasine@iitpkd.ac.in}
\author{K. Murali Krishnan}
%\cortext[mycorrespondingauthor]{Corresponding author}
\ead{kmurali@nitc.ac.in}
\author{Sudeep K. S.}
\ead{sudeep@nitc.ac.in}
\address{Department of Computer Science and Engineering, National Institute of Technology Calicut, Kerala, India 673601}
%\address[mysecondaryaddress]{Department of Computer Science and Engineering, Indian Institute of Technology, Palakkad, Kerala, India 678557}

\begin{abstract}
A plane near-triangulation $G$ can be decomposed into a collection of induced subgraphs,
described here as the W-components of $G$, such that $G$ is perfect 
(respectively, chordal) 
if and only if each of its W-components is perfect (respectively, chordal). 
Each W-component is a $2$-connected plane near-triangulation, free of edge
separators and separating triangles.  Graphs satisfying these conditions will be  
called W-near-triangulations.  A linear time decomposition of $G$ into its 
W-components is achievable using known techniques from the literature. 

W-near-triangulations have the property that the open neighbourhood of every internal
vertex induces a cycle.  It follows that a W-near-triangulation $H$ of at 
least five vertices is non-chordal if and only if it contains an internal vertex. 
This yields a local structural characterization that a 
plane near-triangulation $G$ is chordal if and only if it does not contain 
an induced wheel of at least five vertices.  

For W-near-triangulations that are free of induced wheels of five vertices,
we derive a similar local criteria, that depends only on the neighbourhoods of individual
vertices and faces, for checking perfectness.  We show that a W-near-triangulation $H$ that is free
of any induced wheel of five vertices is perfect if and only if there exists neither an internal vertex 
$x$, nor a face $f$ such that, the neighbours of $x$ or $f$ induces an odd hole.   
The above characterization leads to a linear time algorithm for determining 
perfectness of this class of graphs.   
\end{abstract}
\begin{keyword}
Plane near-triangulated graphs \sep Plane triangulated graphs \sep Chordal graphs \sep Perfect graphs.
\end{keyword}

\end{frontmatter}

%\linenumbers
\section{Introduction}
A plane embedding of a (planar) graph is called a plane near-triangulation 
if the boundary of every face, except possibly the outer face, 
is a cycle of length three. 
We try to derive local characterizations for checking whether a
plane near-triangulation is chordal or perfect. Here, a \textit{local characterization}
refers to a condition that can be checked by inspecting the neighbourhood 
of individual vertices, edges or faces of the graph.  
A graph is chordal if and only if it
is free of induced cycles of length exceeding three
\cite{west1996introduction}.  A graph is perfect if and only if it is
free of induced odd cycles of length exceeding three (or odd holes)
\cite{tucker1973strong}.  

Investigation of the structural properties of plane triangulations and some 
of their subfamilies like Apollonian networks have been elaborately undertaken
in the literature \cite{laskar2011maximal, biedl2004tight, Kumar:1989:NCS:646827.707381,cahit2003characterization}, 
owing to their rich and interesting geometric structure.  Here we investigate local 
structural characterizations for chordal and perfect plane near-triangulations.    

A plane near-triangulation $G$ 
can be decomposed in linear time, into a set of induced component subgraphs, 
which we call the {\em W-components} of $G$ (see Section~\ref{wdecomposition}).    
Each W-component $H$ of $G$ is essentially a $2$-connected 
plane near-triangulation that is free of edge separators and separating
triangles. Graphs satisfying these conditions are referred to as  
{\em W-near-triangulations}.  The neighbourhood of every 
internal vertex of any W-near-triangulation induces a wheel.

The problem of determining whether a plane
near-triangulation $G$ is chordal (respectively, perfect) can be reduced to 
the problem of checking whether each of its W-components 
is chordal (respectively, perfect).  In Section~\ref{wdecomposition} we describe 
a linear time procedure to extract the W-components of $G$,
by adapting a method known in the literature \cite{kant1997more} for identifying
4-connected blocks in a plane triangulation.  

% W-components of $G$ can be extracted
% by first decomposing $G$ into $2$ connected blocks,  
% triangulating each $2$ connected block by adding an extra vertex in the exterior
% face if needed, identifying the four-connected-blocks in each triangulation 
% using methods known in the literature [REF TO KANT], and finally removing the 
% newly added vertex from each $4$-connected block (whenever it is present).  
% Each of the above steps requires only linear time.  

In Section~\ref{sec2} it is shown that a W-near-triangulation $G$ that is not $K_4$ is chordal 
if and only if it does not contain an internal vertex.  
Consequently, we derive a local structural characterization 
that a  plane near-triangulation $G$ is chordal if and only if it does not contain 
an induced wheel of at least five vertices.  

In Section~\ref{sec4}, we show that perfect W-near-triangulations that do not 
contain any induced wheel of five vertices admit a simple local 
characterization.  It is shown that a W-near-triangulation $H$ that does not 
contain any induced wheel of five vertices is perfect 
if and only if there exists neither an internal vertex 
$x$, nor a face $f$ in $H$ such that, the neighbours of $x$ or $f$ induces 
an odd hole.   This local structural characterization results in a linear 
time algorithm for determining whether a W-near-triangulation, that is 
free of any induced wheel of five vertices, is perfect.  
No sub-quadratic time algorithm appears to be known for 
recognizing perfect plane near-triangulations or perfect plane triangulations.  
%%%%%%%%%%%%%%%%%%%%%%%%%%%%%%%%%%%%%%%%%%%%%%%%%%%%%%%%%%%%%%%%%%%%%%%%%%%%%%%%%%%%%%%%%%%%%%%%%%%%%%%%%%%%%%%%%%%%%%
\section{Preliminaries}
Given a plane near-triangulation $G$, we call the vertices on the boundary of the external
face of $G$ as the {\em external vertices} of $G$, denoted by $Ext(G)$ and the remaining vertices
as the {\em internal vertices} of $G$, denoted by $Int(G)$.  
The notation $C_n$ will be used to denote a cycle of $n$ vertices.  If $S\subseteq V(G)$, then $N_{G}(S)$ (respectively, $N_{G}[S]$) denotes 
the open (respectively, closed) neighbourhood 
of the set $S$.   In the case when $S=\{u\}$ for
a vertex $u\in V(G)$, we write $N_{G}(u)$ (respectively, $N_{G}[u])$ 
for the open (respectively, closed) neighbourhood of $u$. 
The suffix will be dropped when the underlying graph $G$ is clear from the context.
% Let $G = (V,E)$ be a simple undirected graph. A \textit{drawing} of a graph maps 
% each vertex $u \in V$ to a point $\varepsilon(u)$ in $\mathbb{R}^2$
% and each edge $uv \in E$ to a path with endpoints $\varepsilon(u)$ and $\varepsilon(v)$. 
% The drawing is a plane embedding if the points are distinct, the paths are simple and do not cross each other, 
% and the incidences are limited to the endpoints. The well known Jordan Curve Theorem \cite{hales2007jordan} states that
% if $J$ is a simple closed curve in $\mathbb{R}^2$, then $\mathbb{R}^2-J$
% has two components, the interior of $J$ (denoted by $Int(J))$ and the 
% exterior of $J$ (denoted by $Ext(J)$), with $J$ as the boundary of each. 
% Given a plane near-triangulation $G$, $Int(G)$ ($Ext(G)$) denotes the set of vertices in the interior (and respectively, exterior) of the closed curve defined by the exterior boundary of the triangulation. 
% 
% %\subsection*{Definitions}
 \begin{definition}[Wheel]\label{Def1}
A wheel on $n$ ($n\ge4$) vertices, $W_n$, is the graph obtained by adding a new vertex $v$ to a cycle $C_{n-1}$ and making 
it adjacent to all vertices in $C_{n-1}$. The cycle $C_{n-1}$ is called the rim of the wheel, 
the vertex $v$ is called the centre of the wheel and the added edges joining $v$ and  vertices in $C_{n-1}$ are called spokes of the wheel.  
\end{definition}
A wheel $W_n$ is called an {\em even wheel} (respectively, {\em odd wheel}) 
if $n$ is even (respectively, odd).  Note that the rim of an even wheel contains
an odd number of vertices and the rim of an odd wheel has an even number of vertices.  
Any induced cycle of length at least four in a graph is called a {\em hole}. 
A hole with odd number of vertices is known as an {\em odd hole}.
A {\em separator} in a connected graph is a set of vertices, 
the removal of which disconnects the graph. 
A clique in a graph is a set of pairwise adjacent vertices. 
A {\em clique separator} is a separator which is a clique.
A clique separator of size two (respectively, three) is 
called an {\em edge separator} (respectively, a {\em separating triangle}).

\begin{definition}[W-near-triangulation]\label{Def3}
A plane near-triangulation $G$ is called a W-near-triangulation 
if $G$ is two connected and, either $G$ is isomorphic to $K_4$ or 
$G$ contains neither a separating triangle, nor an edge separator.  
A W-near-triangulation $G$ is called an even W-near-triangulation 
if the degree of every vertex in $Int(G)$ is even. 
\end{definition}
Note that a W-near-triangulation need not be $4$-connected 
(for example, a wheel on five vertices is a W-near-triangulation,
but contains a $3$-separator).

In the next section, we show that the study of chordality 
(respectively, perfectness) of  
plane-near-triangulations reduces to the study of chordality
(respectively, perfectness) of W-near-triangulations.   

\section{W-decomposition}\label{wdecomposition}
In this section we describe a method to decompose any plane near-triangulation $G$ into a collection of induced subgraphs, 
$G_1,G_2,\ldots, G_k$ (for some $k\geq 1$) in linear time, where each 
$G_i$, $i\in \{1,2\ldots, k\}$ is a W-near-triangulation and
$G$ is chordal (respectively, perfect) if and only if all of $G_1,G_2,\ldots G_k$
is chordal (respectively, perfect).
%We describe how any plane near-triangulation $G$ can be decomposed 
%in linear time into a collection of induced subgraphs, 
%$G_1,G_2,\ldots, G_k$ (for some $k\geq 1$), where each 
%$G_i$, $i\in \{1,2\ldots, k\}$ is a W-near-triangulation and
%$G$ is chordal (respectively, perfect) if and only if all of $G_1,G_2,\ldots G_k$
%is chordal (respectively, perfect). 
The method described here is a combination of known techniques for handling plane 
triangulations, drawn from various sources.   We sketch the details briefly here for
the sake of completeness.  

Let $G$ be a plane near-triangulation. $G$ is chordal (respectively, perfect)
if and only if each of its $2$-connected blocks is chordal (respectively, perfect).
Since we can identify the $2$-connected blocks of $G$ in linear time, we 
assume hereafter that $G$ is $2$-connected.  

Let $uv$ be an edge separator in $G$. Let $H_1$ and $H_2$ be the two components of 
$G\setminus \{u,v\}$.  It is easy to see that $G$ is chordal (respectively, perfect) 
if and only if the subgraphs $G_1$ and $G_2$ induced by 
$V(H_1)\cup\{u,v\}$ and $V(H_2)\cup\{u,v\}$ are chordal (respectively, perfect).  
Similarly, let $uvw$ is a separating triangle in $G$ and, let  
$H_1$ and $H_2$ be the two components of $G\setminus \{u,v,w\}$.    
It is easy to see that $G$ is chordal (respectively, perfect) 
if and only if the subgraphs
$G_1$ and $G_2$ induced by $V(H_1)\cup\{u,v,w\}$ and  $V(H_2)\cup \{u,v,w\}$
are chordal (respectively, perfect).  We can recursively find edge separators
and separating triangles in the components till we are left with a collection
of induced subgraphs $G_1,G_2,\ldots G_k$ of $G$ such that none of them contains
an edge separator or a separating triangle.  That is, we have a decomposition of
$G$ into a collection of maximal W-near-triangulated subgraphs of $G$ 
such that, $G$ is chordal
(respectively, perfect) if and only if each of the subgraphs is chordal 
(respectively, perfect).  We call each maximal W-near-triangulated subgraph of 
$G$ a {\em W-component} of $G$.  This decomposition is a special case of the clique
decomposition described by \citet{Tarjan85}.  We need to show that the decomposition
can be done in linear time.  

The problem of finding edge separators in a $2$-connected
plane near-triangulation is reducible to finding separating triangles,
using a folklore algorithmic trick. 
Given a plane near-triangulation $G$ that is not already a triangulation,
we can triangulate $G$ by artificially adding a new vertex, say $p$, on
the external face of $G$ and making all vertices in the external face
of $G$ adjacent to $p$.  Let the new graph be denoted by $G_p$.  It is easy to see that any
edge separator $uv$ in $G$ must be a chord connecting two vertices in the
external face of $G$ and hence $puv$ must be a separating triangle in $G_p$.
Conversely, for any separating triangle $puv$ in $G_p$ containing the
newly added vertex $p$, $uv$ must be an edge separator in $G$. 

To construct $G_p$ from $G$ in linear time,
we need to find the vertices on the external face of 
$G$ from the adjacency list of $G$.  Here is one possible way to do this.  
We first embed $G$ in an $n \times n$ grid in linear time using the algorithm 
by \citet{schnyder1990embedding}. Now start from a the vertex, say $v_1$ in $G$ 
whose $x$ coordinate is the smallest. This vertex must be on the external
face of $G$. Traverse the adjacency list of $v_1$ to find the  
vertex $v_2$ such that the edge $v_1v_2$ has the largest slope among edges incident on $v_1$.
Clearly, $v_1v_2$ must be an edge on the external face of $G$.  
By traversing the adjacency list of $v_2$ once and finding the angles between $v_1v_2$ and $v_2w$
for each neighbour $w$ of $v_2$, we can identify the edge, say $v_2 v_3$ that appears next to $v_1 v_2$, in 
the clockwise ordering of edges around the vertex $v_2$. It is not difficult to see that the edge $v_2v_3$ is on the external face 
of $G$.
% Now think of the line from $v_1$ to $v_2$ as the new positive $x$ direction
% and find the vertex $v_3$ at 
% maximum angle with respect to the new ``positive x axis" from $v_2$.  
Continuing this way until we reach back $v_1$, we can find all the 
vertices on the boundary of $G$.
Since the adjacency list of each vertex is traversed at most once in the process, the procedure
takes only linear time. 
Adding the vertex $p$ to the adjacency list of every vertex on the boundary of the external face and adding the adjacency list of $p$ to $G$ can be done in linear time.
%Adding the vertex $p$ to the adjacency list of $G$
%and adding to the vertices on the boundary 
%is easily seen to require only linear time.  
   
Thus, starting from a $2$ connected plane near-triangulation $G$, we can 
construct a plane triangulation $G_p$ in linear time 
such that separating triangles in  $G_p$ correspond to either 
edge separators or separating triangles in $G$. 
It is well known that a plane triangulation is $4$-connected if and
only if it is free of separating triangles.  Thus, to find the maximal
W-components of $G_p$, it suffices to find the $4$-connected blocks
of $G_p$, which can be done in linear time using the algorithm by 
Kant\cite{kant1997more}.  It is not hard to see that, by removing
the vertex $p$ from each W-component of $G_p$ (whenever $p$ is
present), we can recover the W-components in $G$, which again requires
only linear time.  Hence we have:
\begin{lemma}\label{lem:wdecomposition}
Given a plane near-triangulation $G$, we can find the 
maximal W-near-triangulated subgraphs (W-components) of $G$ in linear time. 
Moreover, $G$ is chordal (respectively, perfect) if and only if 
each of the W-components is chordal (respectively, perfect).     
\end{lemma}
Consequently, we study W-near-triangulations for the rest of the paper.   
\section{Chordal plane near-triangulations}\label{sec2}
The following Lemma describes the structure of W-near-triangulations.
\begin{lemma}\label{lem:1}
 If $G$ is a W-near-triangulation with at least five vertices then for all  $u \in Int(G)$, $N[u]$ induces a wheel $W_k$ for some $k\ge5$.
\end{lemma}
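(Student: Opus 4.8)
The plan is to show three things about the closed neighbourhood of an internal vertex $u$: that the neighbours of $u$ form a cycle, that this cycle has length at least four, and that it has no chords. Together these say exactly that $N[u]$ induces a wheel $W_k$ with hub $u$ and $k = \deg(u)+1 \ge 5$.

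First I would exploit the fact that $u \in Int(G)$, so that every face incident with $u$ is a bounded face of the near triangulation and hence a triangle. Listing the neighbours of $u$ in the cyclic order in which the spokes leave $u$, say $v_1, v_2, \ldots, v_d$ with $d = \deg(u)$, the triangular face lying between consecutive spokes $uv_i$ and $uv_{i+1}$ forces the edge $v_iv_{i+1}$ (indices mod $d$). Since $u$ is interior the rotation closes up, so $v_1 v_2 \cdots v_d v_1$ is a cycle $C_d$ and all spokes $uv_i$ are present; thus $N[u]$ contains a spanning wheel on these vertices.

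Next I would bound the degree from below. An internal vertex of a triangulation has at least three neighbours, so suppose $\deg(u) = 3$. Then $v_1v_2v_3$ is a triangle with $u$ its only interior vertex. If $G$ had any vertex other than $u,v_1,v_2,v_3$, it would lie outside this triangle, so $\{v_1,v_2,v_3\}$ would be a separating triangle, contradicting Definition \ref{Def3}; and if it had no other vertex then $G = K_4$, which is excluded. Hence $\deg(u) \ge 4$ and $k \ge 5$.

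Finally, and this is the step I expect to be the crux, I would rule out chords of the rim, so that $N[u]$ induces the wheel and nothing more. Suppose $v_iv_j$ is an edge with $v_i,v_j$ non-consecutive on the rim. Since the bounded triangular faces incident with $u$ already tile the closed disk bounded by the rim cycle, $u$ is the unique vertex strictly inside that disk and the edge $v_iv_j$ must be drawn in its exterior. The two spokes $uv_i,uv_j$ split the disk into two sectors, one containing the non-empty arc $v_{i+1},\ldots,v_{j-1}$ and the other the non-empty arc $v_{j+1},\ldots,v_{i-1}$; appending the external chord $v_iv_j$ turns the triangle $uv_iv_j$ into a Jordan curve with one arc in its interior and the other in its exterior. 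By the Jordan Curve Theorem any path joining a vertex of one arc to a vertex of the other must meet $\{u,v_i,v_j\}$, so this triangle is a separating triangle, again contradicting Definition \ref{Def3}. Therefore the rim has no chords, the only edges induced on $N[u]$ are the spokes and the rim cycle, and $N[u]$ induces precisely $W_k$ for some $k \ge 5$. The delicate point throughout is the planarity bookkeeping that places the chord outside the disk and certifies the separation; once that is pinned down with the Jordan Curve Theorem, the rest is immediate.
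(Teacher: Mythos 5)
Your proof is correct and takes essentially the same approach as the paper's: the triangular faces around the internal vertex $u$ force the rim edges between rotation-consecutive neighbours, the degree-$3$ case is excluded via the $K_4$/separating-triangle dichotomy, and any extra adjacency among the neighbours is killed because the triangle $u v_i v_j$ would be a separating triangle. Your chord-elimination step is actually spelled out more carefully (via the disk-tiling and Jordan curve argument) than the paper's terse Case 3, which asserts the separating triangle without justification, but the underlying ideas coincide.
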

\begin{proof}
 Let $G$ be a W-near-triangulation with at least five vertices and $u\in Int(G)$. As $u$ is an internal vertex, $|N(u)|\geq 3$. 
 Let $N(u)=\{u_0,u_1,u_2,\ldots u_{t-1}\}$ for some $t \ge 3$ such that $uu_0 , uu_1 , . . . , uu_{t-1}$ is the clockwise ordering of the edges incident with $u$.
 We claim that $u_i u_{i+1}\in E(G)$, where index $i \in \{0, 1, . . . , t-1\}$ is taken
 modulo $t$. Indeed, if $u_i u _{i+1}$ is not an edge, then $uu_i$ and $uu_{i+1}$ will be on the boundary of a face of length greater than three, 
 contradicting that $G$ is a W-near-triangulation. Consequently, $u_0,u_1,\ldots u_{t-1}u_0$ is a cycle and as $G$ is free of separating triangles, we get $t\geq 4$.  
 Now suppose that there exists an edge $u_i u_j$ with $j \notin \{i + 1, i-1\}$. Then $\{u, u_i , u_j \}$ will be a separating triangle. Therefore $N [u]$ is the wheel
 $W_{t+1}$, containing at least five vertices.  
\end{proof}
The next observation is directly verifiable and form the base case of the inductive argument that follows.
\begin{Observation} \label{obs:5}
Every plane near-triangulation with five or fewer vertices except $W_5$ is chordal. Every plane near-triangulation having no internal vertex is chordal.
\end{Observation}
%Since every cycle in a W-near-triangulation having no internal vertex is 
%a triangle, we have the following observation.  
%
%\begin{Observation}\label{Cor2}
% Every W-near-triangulation having no internal vertex is chordal.
% \end{Observation}
\begin{lemma}\label{lemma12}
A W-near-triangulation except $K_4$ is chordal iff it does not contain any internal vertices.
\end{lemma}
\begin{proof}
Let $G$ be a W-near-triangulation. If $|V(G)|\le 5$ then by Observation~\ref{obs:5},  $G$ is chordal iff $G$ is not $W_5$, which has an internal vertex. 
If $|V(G)|>5$ and there is no internal vertex in $G$, then by Observation~\ref{obs:5}, $G$  is chordal. If $|V(G)|>5$ and $G$ contains at least one 
internal vertex say $u$, then by the Lemma~\ref{lem:1}, $N[u]$ will induce a wheel say $W_k$ for $k\ge 5$ . 
As the rim of $W_k$ is a chordless cycle of length $(k-1)>3$, $G$ is not chordal.
%%Conversely, if $G$ is not chordal and $|V(G)|> 5$ then $G$ contains atleast one  chordless cycle $C_k$ for $k \ge 4$. 
%%Let $G'$ be the subgraph induced by $V(C_k)$ and $Int(C_k)$. As $G$ is near-triangulation, $G'$ should be near-triangulation. 
%%Since $C_k$ is chordless, we have $|Int(C_k)|\neq \phi$. That is, there exists at least one vertex in $Int(C_k)$, which must also be an internal vertex of $G$.
\end{proof}
The following theorem gives a local structural characterization for chordal plane near-triangulations in terms of  the closed neighbourhoods of internal vertices.
\begin{theorem}\label{thm:1}
 A plane near-triangulated graph is not chordal iff it contains an induced wheel of at least five vertices.
\end{theorem}
\begin{proof}
 Let $G$ be a plane near-triangulated graph. If $G$ contains an induced $W_k$ for some $k\ge5$ then the rim of $W_k$ is a chordless cycle of length exceeding three and hence $G$ is not chordal.
Conversely, if $G$ is not chordal, by Lemma~\ref{lem:wdecomposition}, we can decompose 
$G$ into its W-components - say $G_1,G_2,\ldots, G_t$ for some $t>0$ such that $G$ is not chordal if and only if at least one $G_i$, $1\leq i\leq t$ is not chordal. 
Let $G_i$ be a non-chordal W-component of $G$. Since $G_i$ is a plane near-triangulation which is not chordal, by Lemma~\ref{lemma12}, $G_i$ contains at least one internal vertex, say $v$.  
By Lemma~\ref{lem:1}, $N_{G_i}[v]$ induces a wheel $W_k$ for some $k\geq 5$ in $G_i$. Since $G_i$ is an induced subgraph of $G$ (by definition), $N_{G_i}[v]$ induces a 
wheel of at least five vertices in $G$ as well.
\end{proof}
% \begin{corollary}
%  A plane triangulated graph is not chordal iff it contains an induced wheel $W_k$ for some $k\ge5$.
% \end{corollary}
Lemma~\ref{lem:wdecomposition} and Lemma~\ref{lemma12}  yield a linear time algorithm for recognizing chordal plane near-triangulations, 
different from the standard method based on perfect elimination ordering \cite{RoseTL76}, as described below.
Given a plane near-triangulation $G$, it suffices to decompose $G$ into its W-components in linear time and check whether any of the components contain an internal vertex.  
Checking whether a plane near-triangulation contains an internal vertex requires only linear time (for instance, find vertices on the boundary as described in the previous section
and check whether the boundary includes every vertex or not).  Thus, in linear time, chordal plane 
near-triangulations can be recognized.  
\section{Perfect plane near-triangulations}\label{sec3}
Our next objective is to investigate the problem of providing a local characterization for plane near-triangulated perfect graphs similar
in spirit to Theorem~\ref{thm:1}. It is easy to see that the complement of cycle $C_n$ for $n\ge 7$ is not planar. 
Moreover, the complement of $C_5$ is isomorphic to $C_5$.  Thus, it follows from the strong perfect graph theorem \cite{chudnovsky2006strong} that to prove a plane triangulated graph $G$ is perfect, 
it is enough to prove that $G$ does not contain an induced odd hole.

Let $G$ be a plane near-triangulated graph. If $G$ contains an induced wheel on an
even number of vertices (even wheel) then clearly $G$ is not perfect. However the absence of an 
induced even wheel is not sufficient to guarantee the perfectness of a plane near-triangulation.
For example, the graph shown in Figure~\ref{fig1} does not contain any induced even wheel. But 
the vertices on the boundary of external face induce an odd hole. 

By Lemma~\ref{lem:wdecomposition}, we know that the problem
of characterizing perfect plane near-triangulations reduces to the problem
of characterizing perfect W-near-triangulations.  A 
local characterization that is simple enough to yield a {\em linear 
time} recognition procedure for arbitrary perfect W-near-triangulations 
appears hard to find.  Instead, 
we characterize a subclass of W-near-triangulations 
that indeed admits a simple local characterization that leads to a linear
time recognition procedure.  we derive a simple local structural characterization for 
W-near-triangulations that do not contain any induced wheel of five vertices.  
\begin{figure}[!htb]
\centering
 \includegraphics[scale=1]{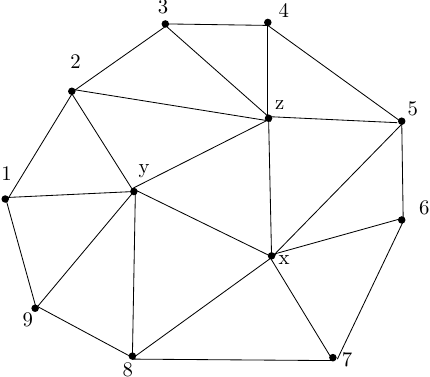}
\caption{An even wheel-free non-perfect plane near-triangulation}
\label{fig1}
\end{figure}
\section{$W_5$ free W-near-triangulations}\label{sec4}
In this section, we prove that any non-perfect $W_5$ free W-near-triangulation 
$G$ contains either an even wheel or contains three vertices forming an 
internal face such that the open neighbourhood of these vertices induces an odd hole.  Throughout this section, we use the notation $W(u)$ to denote a wheel with vertex $u$ at the centre. 
We first establish some properties of W-near-triangulations that will be useful for deriving the characterization.  
\begin{lemma}\label{lem:2}
 If a W-near-triangulation $G$ contains $e$ edges, $f$ internal faces and $t$ edges on the boundary of external face, 
then $f = t \bmod 2$. That is, $f$ is odd if and only if $t$ is odd.
\end{lemma}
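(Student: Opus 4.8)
The plan is to prove the slightly stronger statement that $f\equiv t\pmod 2$ by a direct double-counting of edge--face incidences, which immediately yields the claimed equivalence. The key structural facts I will invoke are that, since $G$ is a $W$-near triangulation, it is $2$-connected, every internal face is a triangle, and the outer boundary is a simple cycle consisting of exactly the $t$ external edges. Consequently every edge lies on the boundary of exactly two faces: an \emph{internal} edge (one not on the external cycle) borders two triangular internal faces, whereas each \emph{external} edge borders exactly one internal face together with the outer face.

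The main computation then proceeds as follows. First I would sum the face-lengths over all internal faces. Since each of the $f$ internal faces is a triangle, this sum equals $3f$. On the other hand, in this same sum each internal edge is counted twice (it lies on two internal faces) and each of the $t$ external edges is counted once (it lies on a single internal face). Writing the number of internal edges as $e-t$, this gives the identity
\[
3f = 2(e-t) + t = 2e - t .
\]
Reducing modulo $2$ and using $3f\equiv f$, we obtain $f\equiv t\pmod 2$, so $f$ is odd precisely when $t$ is odd.

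I expect the only genuine obstacle to be the justification of the incidence bookkeeping rather than the arithmetic: one must be certain that each external edge is incident to exactly one internal (triangular) face and that the external boundary is honestly a simple cycle of $t$ edges, with no repeated vertices and no chords. These are precisely the points guaranteed by Definition~\ref{Def3} (the absence of a chord joining two external vertices and the absence of a separating triangle) together with the defining near-triangulation property that all internal faces are triangles; once they are in hand, no edge is miscounted and the parity identity above is exact. Euler's formula is not strictly required here, although it offers an alternative route via $f = e - n + 1$ should one prefer to phrase the argument in those terms.
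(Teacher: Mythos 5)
Your proof is correct and follows essentially the same route as the paper: both count edge--face incidences over the internal triangular faces to obtain the identity $3f = 2e - t$ and then conclude by reducing modulo $2$. Your additional remarks justifying the incidence bookkeeping (each external edge bordering exactly one internal face) are a sound elaboration of the same argument, not a different approach.
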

\begin{proof}
 Each internal face is bounded with exactly three edges and each edge except those in the boundary of the external face is shared by two faces. 
 This implies $3f=2e-t$. Hence $t$ is odd if and only if $f$ is odd.
 \end{proof}
\begin{definition}[Face intersecting wheels]
 Let $W(u)$ and $W(v)$  ($u \ne v$) be any two wheels in a W-near-triangulation. $W(u)$ and $W(v)$ are said to be face intersecting if 
 they share  at least one face.
\end{definition}
\begin{lemma} \label{lemma2}
 Let $G$ be a W-near-triangulation and $W(u)$ and $W(v)$ ($u \ne v$) be any two face-intersecting 
 wheels in $G$,  then $W(u)$ and $W(v)$ share exactly two faces.  Further, the edge $uv$ 
 is on the boundary of these two faces. 
\end{lemma}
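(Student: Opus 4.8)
The plan is to identify the faces shared by the two wheels with the triangular faces of $G$ incident to the edge $(u,v)$, and then to count the latter using planarity.

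First I would record two structural facts about an even wheel $W_u$ in $G$. By Lemma~\ref{lem:1} the centre $u$ is an interior vertex, so every edge incident to $u$ is an interior edge and every face of $G$ incident to $u$ is a triangle. Concretely, if $u_1,\dots,u_{k-1}$ are the neighbours of $u$ listed in rotational (say clockwise) order around $u$, then the faces of $G$ containing $u$ are exactly the ``spoke triangles'' $(u,u_i,u_{i+1})$ determined by consecutive neighbours, and these are precisely the bounded faces of the wheel $W_u=N[u]$. Hence \emph{every face of $W_u$ contains $u$}, and likewise every face of $W_v$ contains $v$.

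Next I would analyse a single shared face $f$. Since $f$ is a face of $W_u$ it contains $u$, and since it is a face of $W_v$ it contains $v$; being a triangle, $f$ must equal $(u,v,w)$ for some vertex $w$. Two conclusions follow at once: $(u,v)\in E(G)$, and the edge $(u,v)$ lies on the boundary of $f$. Since $f$ was an arbitrary shared face, this already proves that $(u,v)$ bounds every face common to $W_u$ and $W_v$, and it reduces the whole problem to counting the triangular faces of $G$ incident to the edge $(u,v)$. Then I would invoke planarity: as $u$ is interior, $(u,v)$ is an interior edge of the near-triangulation $G$ and therefore borders exactly two distinct triangular faces, say $(u,v,w_1)$ and $(u,v,w_2)$. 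Each of these contains both $u$ and $v$, so each is a spoke triangle of $W_u$ and of $W_v$, i.e.\ a shared face; conversely, every shared face is incident to $(u,v)$ and hence is one of these two. Therefore $W_u$ and $W_v$ share exactly two faces, each carrying $(u,v)$ on its boundary, as claimed.

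The step I expect to need the most care is the first one: verifying that the bounded faces of the wheel $W_u$ coincide with the faces of $G$ incident to $u$, i.e.\ that consecutive neighbours of $u$ are adjacent and that no further vertex of $G$ lies strictly inside the rim cycle. Both follow from $N[u]$ inducing exactly $W_k$ (Lemma~\ref{lem:1}) together with the defining property that a $W$-near triangulation has no separating triangle, which prevents any spoke triangle from enclosing additional vertices. I would also remark that the evenness hypothesis on $W_u$ and $W_v$ (Definition~\ref{Def:even-W}) plays no role in the count and is retained only because the lemma will be applied to even wheels.
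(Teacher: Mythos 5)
Your proof is sound and reaches the statement by a genuinely different counting argument than the paper's. The paper works on the rims: a shared face forces $u$ onto the rim of $W_v$ and $v$ onto the rim of $W_u$, so $(u,v)$ is a spoke of both wheels; it then lets $x,y$ be the rim-neighbours of $u$ in $W_v$ and $p,q$ the rim-neighbours of $v$ in $W_u$, and uses the no-chord/no-separating-triangle property of a $W-$near triangulation to force $\{p,q\}=\{x,y\}$, so that $N(u)\cap N(v)=\{x,y\}$ and the shared faces are exactly the triangles $(u,x,v)$ and $(u,y,v)$. You instead observe that every face of a wheel contains its hub, conclude that any shared face is a triangle carrying the edge $(u,v)$, and then invoke planarity: an interior edge of a near triangulation lies on exactly two (triangular) faces of $G$. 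Your route is shorter and dispenses with the chord analysis entirely; the paper's route yields the extra structural information that $u$ and $v$ have exactly two common neighbours and that rim edges of one wheel are spokes of the other, which is the form in which the lemma feeds into Corollary~\ref{cor1}. Your closing remark that the evenness hypothesis is never used is equally true of the paper's argument.

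One caveat, which you should repair even though it does not affect the lemma in the contexts where the paper applies it: your assertion that ``by Lemma~\ref{lem:1} the centre $u$ is an interior vertex'' reads that lemma backwards. Lemma~\ref{lem:1} says that interior vertices have wheel neighbourhoods, not that wheel hubs are interior; in fact, by Corollary~\ref{Cor1}, in a $W-$triangulation \emph{every} vertex, including the three external ones, is the hub of an induced wheel (the octahedron is an example). For an external hub $u$ your argument genuinely fails: $(u,v)$ may lie on the external cycle, in which case it bounds only one internal face, and whether the lemma survives depends on whether one counts the outer face as ``shared.'' The paper's proof silently makes the same interiority assumption (it assumes every face of $W_u$ contains $u$), and the assumption holds wherever the lemma is invoked (in Lemmas~\ref{lem5} and~\ref{lem6} the wheels are centred at internal vertices), but in your write-up it should be stated as a hypothesis or verified from the context of application, not attributed to Lemma~\ref{lem:1}.
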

\begin{proof}
 Since $W(u)$ and $W(v)$ are face-intersecting and $u\ne v$, $u$ should be on the rim of $W(v)$. 
 Similarly $v$ should lie on the rim of $W(u)$. Hence the edge $uv$ should be a spoke in both the wheels. 
 As $u$ is on the rim of $W(v)$, $u$ will have exactly two neighbours (say $x$, $y$) on the rim of $W(v)$. 
 Similarly $v$ also have two neighbours (say $p$, $q$) on the rim of $W(u)$. If $p \ne x$ and $p \ne y$ then the edge $pu$ 
 will be a chord on the wheel $W(v)$ and the vertices $u,p,v$ forms a separating triangle in $G$, which is a contradiction to the definition of W-near-triangulation. Hence $p=x$ or $p=y$
 Similarly $q= y$ or $q = x$. This implies that either $p = x$ and $q=y$ or $p = y$ and $q=x$. 
 So $x$ and $y$ are the only vertices in $N(u) \cap N(v)$ and the edges $ux$ and $uy$ on the rim of $W(v)$ are also 
 spokes of $W(u)$ and $vx$ and $vy$ on the rim of $W(u)$ are
also spokes of $W(v)$. That is, $\{ux,xv,vu)\}$ and $\{uy,yv,vu\}$ are the only two faces shared by 
$W(u)$ and $W(v)$.
\end{proof}
\begin{corollary}\label{cor1}
Let $W(x)$, $W(y)$ and $W(z)$ (with $x\ne y\ne z$) be three (pair-wise) 
face-intersecting odd wheels in a W-near-triangulation $G$. Then they 
share exactly the common face $\{xy,yz,xz\}$. 
\end{corollary}
\begin{proof}
Since $W(x)$ and $W(y)$ are face-intersecting, by Lemma ~\ref{lemma2}, they share two faces 
(faces which has the edge $xy$ as one of its boundary). Similarly $W(y)$ and $W(z)$ share two faces 
(faces which has the edge $yz$ as one of its boundary) 
and $W(x)$ and $W(z)$  share two faces (faces which has the edge $xz$ as one of its boundary). 
 This implies that $xy$, $yz$ and $xz$ forms either a separating triangle or a face which is shared by $W(x)$, $W(y)$ and $W(z)$. But as $G$ is a W-near-triangulation, 
 the edges $xy$, $yz$ and $xz$ can not form a separating triangle.
\end{proof}
\begin{definition}[$W_{\Delta}$]
 Let $G$ be a W-near-triangulation and $W(x),W(y)$ and $W(z)$ be three face intersecting odd wheels in $G$. 
 If $N[x] \cup N[y] \cup N[z] \setminus \{x,y,z\}$ induces an odd hole in $G$, then the subgraph induced by  
 $N[x] \cup N[y] \cup N[z]$ is called a $W_{\Delta}$. The graph shown in Figure \ref{fig1} is an example of $W_{\Delta}$.
\end{definition}
If a W-near-triangulation $G$ with at least five vertices 
contains an internal vertex $u$ of odd degree
exceeding $3$, then
the rim of $W(u)$ induces an odd hole in $G$ and thus $G$ cannot be
perfect.  Since an internal vertex of degree $3$ would induce a separating
triangle, a W-near-triangulation with at least $5$ vertices cannot contain an internal
vertex whose degree is $3$.  
Consequently, the non-trivial case to handle is to characterize 
perfect W-near-triangulations whose
internal vertices are all of even degree (even W-near-triangulations).  
\begin{lemma}\label{lem5}
 Let $G$ be a $W_5$ free even W-near-triangulation and $W(x),W(y)$ and $W(z)$ be three face intersecting wheels in $G$. 
 Then $N[x] \cup N[y] \cup N[z]$ induces a $W_{\Delta}$.
\end{lemma}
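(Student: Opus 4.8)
The plan is to realise the set $N[x]\cup N[y]\cup N[z]\setminus\{x,y,z\}$ as the vertex set of a single cycle $C$ running along the outer boundary of the three wheels, and then to verify that $C$ is a chordless cycle of odd length. First I would fix the combinatorial skeleton. Since $W_x,W_y,W_z$ are pairwise face-intersecting and $G$ is even (so each hub has even degree and each wheel is an even wheel), Corollary~\ref{cor1} gives that they share exactly the common face $\{(x,y),(y,z),(x,z)\}$, so $x,y,z$ bound a triangular face. Applying Lemma~\ref{lemma2} to each pair yields $N(x)\cap N(y)=\{z,c_{xy}\}$, $N(y)\cap N(z)=\{x,c_{yz}\}$ and $N(x)\cap N(z)=\{y,c_{xz}\}$ for three further ``corner'' vertices. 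These corners must be distinct, for if, say, $c_{xy}=c_{yz}$, then that vertex together with $x,y,z$ would induce a $K_4$ and hence a separating triangle, contradicting Definition~\ref{Def3}.

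Next I would describe the boundary cycle. On the rim of $W_x$ the vertices $y,z$ are adjacent (they bound the shared face with $x$), so deleting $y,z$ from this rim leaves a path $A_x=N(x)\setminus\{y,z\}$ whose endpoints are the rim-neighbour $c_{xy}$ of $y$ and the rim-neighbour $c_{xz}$ of $z$; define $A_y,A_z$ analogously. By Lemma~\ref{lemma2} the only vertices shared by two different rims are the corresponding corners, so the three paths are internally disjoint and meet precisely at $c_{xy},c_{yz},c_{xz}$. Concatenating them produces a cycle $C$ with $V(C)=A_x\cup A_y\cup A_z=N[x]\cup N[y]\cup N[z]\setminus\{x,y,z\}$, exactly the set named in the statement.

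For the parity I would use evenness and $W_5$-freeness: every rim is an even cycle, and $W_5$-freeness forces each rim to have at least six vertices. Deleting two adjacent vertices from an even cycle leaves a path with an odd number of edges, so each of $A_x,A_y,A_z$ has odd length; as the arcs share only their endpoints, $C$ has odd length (at least $9$). Hence, once $C$ is shown to be induced, it is an odd hole and $N[x]\cup N[y]\cup N[z]$ induces a $W_{\Delta}$ by definition.

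The remaining, and hardest, step is to prove that $C$ has no chord. Let $U$ be the closed region obtained as the union of the three wheel-disks; its boundary is $C$, its only interior vertices are $x,y,z$, and by Corollary~\ref{Cor1} its interior is completely triangulated by the hub-incident faces of the three wheels. A chord drawn inside $U$ is therefore impossible, since a new edge between two boundary vertices would have to cross an existing edge. Thus any chord $uv$ must be drawn outside $U$; the informative case is $u\in A_x$, $v\in A_y$ with $u,v$ non-corner, which forces $v\notin N(x)$ and $u\notin N(y)$, so that $x,u,v,y$ is a chordless $4$-cycle $\gamma$. In a triangulation such a cycle is separating, and a rotation argument at $x$ shows that $z$ and $c_{xy}$ lie on opposite sides of $\gamma$; I expect this to contradict the absence of separating triangles and external chords guaranteed by Definition~\ref{Def3}. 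Discharging this final contradiction cleanly, for every position of $u$ and $v$ and symmetrically across the three arcs, is the main obstacle, whereas the skeleton and the parity count are routine.
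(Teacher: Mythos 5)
Your first two steps are sound: the skeleton (the common face from Corollary~\ref{cor1}, the two corner vertices per pair of wheels from Lemma~\ref{lemma2}, and the three arcs concatenating into a cycle $C$ on $N[x]\cup N[y]\cup N[z]\setminus\{x,y,z\}$) and the parity count (each arc has odd length because the rims are even cycles sharing two adjacent vertices, so $C$ is odd of length at least $9$) are both correct; the paper leaves this routine part largely implicit. The genuine gap is exactly the step you yourself flag as ``the main obstacle'': excluding a chord of $C$ drawn outside the union $U$ of the three wheel disks. The contradiction you hope for does not exist. Definition~\ref{Def3} forbids separating triangles and external-face chords, but it says nothing about chordless separating $4$-cycles, and your cycle $\gamma$ through $x,u,v,y$ with $z$ on one side and $c_{xy}$ on the other is not, by itself, in conflict with anything. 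Concretely, consider the case where the chord $(u,v)$ cuts off just the corner $c_{xy}$, i.e., the far side of $\gamma$ is tiled by the faces $(x,u,c_{xy})$, $(u,v,c_{xy})$, $(v,y,c_{xy})$: this configuration extends to a perfectly legitimate $W$-triangulation in which every triangle is a face. What rules it out is not Definition~\ref{Def3} but the fact that $c_{xy}$ then has degree $4$, so $N[c_{xy}]$ induces a $W_5$, contradicting $W_5$-freeness. Your outside-chord argument never invokes $W_5$-freeness or evenness, yet these hypotheses are indispensable (in the octahedron, an even $W$-triangulation, three face-intersecting wheels bound a triangle rather than an odd hole), so no argument at that level of generality can close the gap.

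The paper closes it with a counting argument in which the degree hypotheses do all the work. Supposing a chord of $C$ exists, it picks an innermost one, $(p,q)$, so that no two of the boundary vertices $q_1,\dots,q_r$ strictly between $p$ and $q$ are themselves joined by a chord, and considers the near triangulation $G_3$ bounded by $(p,q)$ and that boundary path, with $x$, $y$ and the two outer rim neighbours $s,t$ deleted. Evenness and $W_5$-freeness force every internal vertex of $G$ to have degree at least $6$; consequently in $G_3$ every internal vertex has degree at least $6$, every external vertex other than $p$, $q$ and the vertex $q_i$ common to both rims has degree at least $5$, and $p,q,q_i$ have degree at least $3$. Combining the resulting bound $2e\ge 6n_i+5(n_e-3)+9$ with the identity $e=2n_e+3n_i-3$ (Euler's formula together with $3f=2e-n_e$ from Lemma~\ref{lem:2}) yields $0\ge 2n_e$, a contradiction. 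If you want to finish along your own lines, you will need a quantitative argument of this type applied to the region your chord cuts off; the purely structural appeal to Definition~\ref{Def3} cannot succeed.
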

 \begin{proof}
  Let $x$, $y$ and $z$ be three vertices of $G$ such that $W(x),W(y)$ and $W(z)$ are face intersecting wheels in $G$. 
  Let $G_1$ be the subgraph of $G$ induced by the vertices $x$, $y$, $z$ and their neighbours. 
  That is, $G_1$ is a subgraph of $G$ induced by $N[x] \cup N[y] \cup N[z]$. 
  Let $G_2$ be the subgraph of $G_1$ induced by $V(G_1)\setminus \{x,y,z\}$.
  If $G_1$ does not induce a $W_{\Delta}$ then there exists at least one chord in $G_2$. 
  Without loss of generality we may assume that there exists two non consecutive vertices $p$ and $q$ on the rim of wheels $W(x)$ and $W(y)$ respectively such that 
  $pq$ is a chord in $G_2$. We may further assume without loss of generality that
  there is no chord between the vertices of the clockwise boundary of $G_1$ from $p$ to $q$ (see Figure~\ref{Eg1}).
  % there does not exist any chord joining any pair of vertices between $p$ and $q$ in $G_1$. 

  Let $P=pq_1q_2\ldots q_r q$ (where $r \ge 1$) be the path joining $p$ and $q$ in $G_1$ (see Figure~\ref{Eg1}). 
  As $W(x)$ and $W(y)$ are face intersecting,there must be at least one vertex, say $q_i$, $1\leq i\leq r$) in $P$ that lies on the rim of both the wheels $W(x)$ and $W(y)$ (see Figure~\ref{Eg1}). 
 
 Let $s$ be the neighbour of $p$ on the rim of $W(x)$ in the anti clockwise direction and $t$ be the neighbour 
  of $q$ on the rim of $W_y$ in the clockwise direction (see Figure~\ref{Eg1}).
  Let $G_3$ be the subgraph of $G_1$ induced by the vertices $p,q_1,\ldots, q_i, \ldots, q_r,q$ and their neighbours except $x,y,s$ and $t$. 
  That is, $G_3$ is the subgraph of $G_1$ induced by the vertices
  $(N[p]\cup N[q_1] \cup..N[q_i]\cup N[q])\setminus \{x,y,s,t\}$ (see figure \ref{Eg2}). 
   \begin{figure}[h]
\centering
\begin{tikzpicture} [every node/.style={draw,circle,inner sep=2pt,minimum size=2pt,font=\footnotesize},
  node distance=1.75cm,on grid]
 \tikzstyle{peers}=[draw,circle,black,bottom color=white,top color= white, text=white,minimum width=1pt]
 
 \node (one) at (-6,3) {x};
 \node (two) at (-3.7,2.2) {y};
 \node (three) at (-7.8,2.8) {s};
 \node (four) at (-7.4,4) {p};
 \node (thirteen) at (-6,4.5) {$q_1$};
 \node (five) at (-4.5,4) {$q_i$};
 \node (fourteen) at (-3.0,4) {$q_r$};
 \node (six) at (-5,1) {z};
 \node (seven) at (-7,1.5) {};
 \node (eight) at (-6.5,0) {};
 \node (nine) at (-4.4,-0.4) {};
 \node (ten) at (-2,3) {q};
 \node (eleven) at (-2,2) {t};
 \node (twelve) at (-3,0) {};
  \draw (one) -- (three);
 \draw (fourteen) to (ten);
 \draw (one) to (thirteen);
 \draw [red] (one) --(four);
 \draw [red] (one) -- (five);

 \draw (one) -- (seven);
 \draw (one) -- (two);
 \draw (one) -- (six);
\draw (seven) -- (three);
\draw [red] (four) -- (three);
\draw (four) -- (thirteen);
\draw (seven) -- (six);
\draw (six) -- (two);
\draw [red] (five) -- (two);
\draw (two) -- (fourteen);
\draw [red] (two) -- (ten);
\draw (two) -- (eleven);
\draw [red] (ten) -- (eleven);
\draw (six) -- (eight);
\draw (six) -- (nine);
\draw (seven) -- (eight);
\draw (six) -- (twelve);
\draw (twelve) -- (eleven);
\draw (two) -- (twelve);
\draw (nine) -- (twelve);
 \draw [thick,dotted,bend left](thirteen) to (five);
  \draw [thick,dotted,bend right](fourteen) to (five);
   \draw [thick,dotted,bend left](nine) to (eight);
  \draw (four) .. controls (-7.8,5.2) and (-2.5,6) ..(ten);
 \draw [thick,dotted] (-3.9,3) -- (-3.5,3);
  \draw [thick,dotted,bend right] (-5.8,3.6) -- (-5.5,3.5);
  \draw [thick,dotted,bend right] (-5.5,0.5) -- (-5.0,0.3);
\end{tikzpicture}
\caption{$G_1$}
\label{Eg1}
\end{figure}
\begin{figure}
 \centering
\begin{tikzpicture} [every node/.style={draw,circle,inner sep=2pt,minimum size=2pt,font=\footnotesize},
  node distance=1.75cm,on grid]
 \tikzstyle{peers}=[draw,circle,black,bottom color=white,top color= white, text=white,minimum width=1pt]
 
 \node (one) at (0,-0.8) {$q_i$};
 \node (four) at (-3,-0.8) {$q_1$};
 \node (five) at (3,-0.8) {$q_r$};
 \node (two) at (5,-0.8) {q};
 \node (three) at (-5,-0.8){p};
 \draw  (three) to (four);
 \draw (five) to (two);
 \draw [thick,dotted] (one) to (four);
 \draw [thick,dotted] (one) to (five);
 \draw (three) .. controls (-5,3) and (5,3) ..(two);
 \draw (one) -- (-1,1);
 \draw (one) -- (1,1);
 \draw (three) -- (-2.7,0.8);
 \draw (three) -- (-3.7,1);
 \draw (two) -- (2.7,0.8);
 \draw (two) -- (3.7,1);
 \draw [thick,dotted] (-0.5,0.6) to (0.5,0.6); 
 \draw [thick,dotted] (-4,-0.1) to (-4.3,0.2); 
 \draw [thick,dotted] (3.9,0) to (4.3,0.1); 
%  \draw (two) to (6.5,-1.5);
\end{tikzpicture}
\caption{$G_3$}
\label{Eg2}
\end{figure}
  Let  $e$, $f$, $n_e$ and $n_i$ be the number of edges, internal faces, external vertices and internal vertices in $G_3$ 
  respectively. Let $n=n_e+n_i$ be the total number of vertices in $G_3$.
 Since $G_3$ is internally triangulated, by Lemma~\ref{lem:2} we have,
  \begin{equation}  \label{eq1}
  3f=2e-n_e 
\end{equation}    
Using Euler's formula \cite{west1996introduction} we get:
   \begin{equation}\label{eq2}
    3n_e+3n_i+3f = 3e+3
   \end{equation}
 from $\eqref{eq1}$ and $\eqref{eq2}$ we get:
\begin{equation}\label{eq3}
 e=2n_e+3n_i-3
\end{equation}
Let $V_i$ and $V_e$ be the set of internal and external vertices in $G_3$ respectively.
As $G_3$ is an induced subgraph of $G_1$ and $pq$ a chord in $G_1$, all vertices except $p$ and $q$ in $V_e$ are 
internal vertices of $G_1$ (see Figure~\ref{Eg1} and Figure~\ref{Eg2}). 
Also every vertex in $V_e$ except $q_i$ must either be on the rim of $W_x$ or on the rim of $W_y$, but not on both. 
That is, for all external vertices  in $G_3$ except $p$, $q$ and $q_i$, all but one of their neighbours in $G_1$ must be  
in the graph $G_3$. It follows that the degree of all vertices on the external face of $G_3$ except $p$, $q$ and $q_i$ 
will be at least five. This is true because we have assumed that $G_1$ is a $W_5$ free even W-near-triangulation and 
hence has no internal vertex of degree below six. 

The vertices $p$, $q$ and $q_i$ have two neighbours on the cycle $p,q_1,\ldots, q_i, \ldots, q_r,q$ and 
as $G_3$ is plane near-triangulated, they must have at least one neighbour in $V_i$. So the
degree of $p$, $q$ and $q_i$ will be at least three in $G_3$. 
Since every neighbour (in $G_1$) of vertices in $V_i$ is also present in $G_3$, 
degree of all vertices in $V_i$ must be at least six in $G_3$. Counting the degree of vertices, we get:
\begin{equation}
  2e \ge 6n_i+5(n_e-3)+ 9
\end{equation}

Substituting $\eqref{eq3}$ we get,
\begin{equation}
 4n_e+6n_i-6 \ge 6n_i+5n_e-6 \Longrightarrow 0 \ge 2n_e \Longrightarrow 0 \ge n_e
\end{equation}
which is a contradiction.

\end{proof}

The following lemma shows that Lemma~\ref{lem5} characterizes all non-perfect $W_5$ free even W-near-triangulations. 

\begin{lemma}\label{lem6}
 Every $W_5$ free even W-near-triangulation $G$ that contains an induced odd hole must contain an induced $W_{\Delta}$.
 \end{lemma}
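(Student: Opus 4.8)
The plan is to reduce the statement to Lemma~\ref{lem5}: if I can produce three pairwise adjacent interior vertices, then they are the centres of three face\-intersecting even wheels, and so their closed neighbourhoods span a $W_{\Delta}$. Rather than start from an arbitrary odd hole, I would work with an extremal one: among all induced odd cycles of $G$ fix a hole $H$ bounding the fewest internal faces, and let $D$ be the closed disc it encloses. Since $H$ is chordless with $|H|\ge 5$, the disc $D$ cannot be triangulated using the vertices of $H$ alone, so $D$ has a vertex in its interior; every such vertex is interior to $G$ and, by Lemma~\ref{lem:1} together with evenness and $W_5$\-freeness, carries an even wheel $W_k$ with $k\ge 7$, whose rim is a cycle of length at least $6$.

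The crux is a locality property forced by minimality: every vertex strictly inside $H$ has at most two neighbours on $H$, and these are consecutive on $H$ when there are two. Suppose instead that an interior vertex $v$ had two non\-consecutive neighbours $u_i,u_j\in H$. The spokes $vu_i$ and $vu_j$ cut $D$ into two internally triangulated sub\-discs whose boundary cycles $C_1,C_2$ have lengths summing to $|H|+4$; as $|H|$ is odd, one of them, say $C_1$, is odd. I would then invoke the elementary fact that a triangulated disc with an odd boundary cycle contains an odd hole: a chord of the boundary splits it into two cycles whose lengths sum to the old length plus two, so one side stays odd, and iterating lands on an induced odd cycle. This produces an odd hole inside $C_1$ enclosing strictly fewer faces than $H$, contradicting the choice of $H$; three or more neighbours on $H$ contain a non\-consecutive pair and are excluded identically.

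Granting the locality property, fix any interior vertex $v$ of $D$ and look at the rim of $W_v$, a cycle of length at least $6$. By locality at most two of its vertices lie on $H$, and if two do they are consecutive on the rim (they share a face with $v$); deleting them leaves at least four rim vertices strictly inside $H$ in a single arc, so two consecutive rim vertices $z_1,z_2$ lie strictly inside $H$ and are interior to $G$. Then $v,z_1,z_2$ are pairwise adjacent interior vertices, and since a $W-$near triangulation has no separating triangle (Definition~\ref{Def3}) the triangle $vz_1z_2$ is a face; hence $W_v,W_{z_1},W_{z_2}$ are face\-intersecting even wheels, and Lemma~\ref{lem5} shows that $N[v]\cup N[z_1]\cup N[z_2]$ induces a $W_{\Delta}$.

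The step I expect to be delicate is the minimality argument of the second paragraph: one must verify that the two spokes genuinely separate $D$ into two smaller triangulated discs, and must have the auxiliary ``odd boundary forces an odd hole'' lemma in place. Once that locality property is secured, the wheel\-plus\-pigeonhole extraction of the triangle $vz_1z_2$ and the appeal to Lemma~\ref{lem5} are routine.
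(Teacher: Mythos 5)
Your reduction to Lemma~\ref{lem5} via a face triangle $vz_1z_2$ of interior vertices is sound \emph{given} your locality property, and it mirrors the easy case of the paper's proof. The gap is in the second paragraph: the ``elementary fact'' you invoke --- that a triangulated disc with an odd boundary cycle contains an odd hole --- is false. Any maximal outerplanar triangulation of an odd cycle (e.g.\ the $5$-cycle $v_1v_2v_3v_4v_5$ with chords $v_1v_3,v_1v_4$) is a triangulated disc with odd boundary and is chordal, hence has no holes at all. The flaw in the iteration is that when a chord splits an odd cycle, the odd piece may be a triangle, and triangles are not holes; the recursion can terminate with every odd piece a triangle. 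The same failure hits the splitting at $v$ directly: if $|H|=7$ and $v$ is adjacent to $u_1,u_2,u_4,u_6$, the induced cycles through $v$ have lengths $3,4,4,4$, so no smaller odd hole is produced and minimality of $H$ is not contradicted, even though $v$ has non-consecutive neighbours on $H$. Thus the locality property is unproven, and without it your pigeonhole step (finding two consecutive rim vertices of $W_v$ strictly inside $H$) can fail --- the rim could alternate between $H$-vertices and interior vertices.

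For contrast, the paper avoids any minimality argument. It takes the subgraph $G'$ induced by $Int(C)$, proves $G'$ is connected, and splits into two cases: if $G'$ contains a triangle, Lemma~\ref{lem5} finishes (this is where your final step agrees with the paper); if $G'$ is triangle-free, it must induce a path or a tree of even-degree hubs, and then a parity count of internal faces of the disc, combined with Lemma~\ref{lem:2} ($3f=2e-t$), forces the boundary cycle $C$ to be even --- contradicting that $C$ is an odd hole. That parity ingredient is exactly what your approach is missing: the configurations your minimality argument cannot exclude (interior vertices with scattered neighbours on $H$, no interior triangle) are precisely the ones the paper kills by face-counting. If you want to salvage your outline, you would need to replace the false auxiliary lemma by such a parity argument rather than by minimality.
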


\begin{proof}
 Let $C$ be an induced odd hole in $G$.  As $G$ is a plane near-triangulation, there must exist at least one vertex in $Int(C)$. 
 Let $G'=(V',E')$ be the subgraph induced by the vertices in $Int(C)$. 
 If $|V'|=1$ then $V' \cup V(C)$ will have to induce an odd wheel which is impossible as $G$ is an even W-near-triangulation.   
 The case $|V'|=2 $ is also not possible as two face intersecting odd wheels will not induce an odd hole (See Lemma ~\ref{lem:2}).
 Thus we may assume that  $|V'|\ge 3$.  Let $G''$ be the subgraph of $G$ induced 
  by the vertices $V' \cup V(C)$.  We have to consider the following cases. 
\begin{enumerate} 

  \item $G'$ contains an induced triangle $\triangle=(xyz)$:  In this case, $W(x)$, $W(y)$ and $W(z)$ are face intersecting odd wheels and  
  by Lemma~\ref{lem5}, $N(x)\cup N(Y) \cup N(z)$ induces an odd hole, proving the lemma.  
 
  \item $G'$ does not contain any induced triangles: In this case, as $G$ is a plane near-triangulation, the only possibility is that $V'$
induces a tree $T$ of at least $3$ vertices. (See Figure ~\ref{Eg4a}.  $T$ could possibly a path as in Figure~\ref{Eg4}.) 
Let $v_0,v_1,\ldots, v_r$  for some $r\ge 2$ be the vertices in $T$ ordered in such a way that $v_0$ is the root of the tree and each node $v_j$ for $j>0$ is a child of some unique $v_i$, $i<j$ in $T$.  
 Note that every neighbour of 
 a vertex $v_i$ in $T$ except its children and its parent in the tree $T$
 must be a vertex in the odd hole $C$.  
 Let $W(v_i)$  be the wheel induced by $N[v_i]$. Since $G$ is an even $W_5$ free plane near-triangulation, $v_i$ must have even degree 
  (greater than 4) for each $i \in \{0,1,\dots, r\}$. Further, each edge incident on $v_i$ ($0\le i \le r$) is 
  shared by exactly two internal faces in $G''$ (See Figure ~\ref{Eg4a}).  
Hence, for each  $i,j \in \{0,1,\dots, r\}$, if $v_i$ is the parent
of $v_j$ in the tree $T$, the wheels $W(v_i)$ and $W(v_j)$ must be face intersecting 
odd wheels sharing exactly two faces. Using this observation, we count the 
the total number of internal faces in $G''$ to be  
  $f= \sum_{i=0}^{r} deg(v_i) - 2(r-1)$.  As the degree of every internal vertex in $G''$ is even,  $f$ must be even. Then by Lemma~\ref{lem:2}, the number of 
external vertices of $G''$ should be even. That is,  $|V(C)|$ must be even.  However, this contradicts the assumption that $C$ is an odd hole. 
   \end{enumerate}
% 
%  \item[b)] $V'$ induces a path say $v_0,v_1,\ldots, v_r$ for some $r\ge 2$ with the neighbours of every vertex $v_i$, $i \in \{0,1,\dots, r\}$ inducing 
%  an even wheel, which we denote as $W(v_i)$.  Note that all the neighbours of $v_i$ (except $v_{i-1}$ and $v_{i+1}$ for $0<i<r$) must be
%  in $C$, for otherwise, $Int(C)$ will contain a triangle.    Moreover, since $G$ is $W_5$ free, $v_i$ must have even degree (greater than 4) 
%  for each $i \in \{0,1,\dots, r\}$. Further, each edge incident on $v_i$ ($0\le i \le r$) is shared by exactly two
%  internal faces in $G''$ (See Figure ~\ref{Eg4}).  Hence $\forall i \in \{1,2,\dots, r\}$, the wheels $W(v_i)$ and 
%  $W(v_{i-1})$ must be face intersecting even wheels. Consequently, the total number of internal faces in $G''$,  
%  $f= \sum_{i=0}^{r} deg(v_i) - 2(r-1)$.  As the degree of every internal vertex in $G''$ is even,  $f$ must be even. Then by Lemma~\ref{lem:2} number of 
%  external vertices should be even. That is,  $|V(C)|$ must be even.  However, this contradicts the assumption that $C$ is an odd hole. 
% \end{itemize}
   
   \begin{subfigures}
 
   \begin{figure}[h]
\centering
\begin{tikzpicture} [every node/.style={draw,circle,inner sep=2pt,minimum size=2pt,font=\footnotesize},
  node distance=1.75cm,on grid]
 \tikzstyle{peers}=[draw,circle,black,bottom color=white,top color= white, text=white,minimum width=1pt]
 
 \node (one) at (-6,2.5) {$v_0$};
 \node (two) at (-3.7,2.2) {$v_1$};
 \node (twenty) at (-2,2.5) {$v_3$};
  \node (twenty one) at (-2.5,1) {$v_4$};
 \node (three) at (-7.8,2.8) {};
 \node (four) at (-7.4,4) {};
 \node (thirteen) at (-6,4.5) {};
 \node (five) at (-4.5,4) {};
 \node (fourteen) at (-3.0,4) {};
\node (six) at (1,2) {};
 \node (seven) at (-7,0.5) {};
\node (eight) at (0,3) {};
%  \node (nine) at (-4.4,-0.4) {};
 \node (ten) at (-2,4) {};
 \node (eleven) at (-5,1) {$v_2$};
  \node (fifteen) at (0,2) {$v_r$};
 \node (twelve) at (-4,0) {};
 \node (sixteen) at (1,1) {};
 \node (seventeen) at (0,0) {};
  \draw (fifteen) -- (sixteen);
  \draw (six) -- (sixteen);
  \draw [thick,dotted,bend left] (sixteen) to (seventeen);
  \draw (fifteen) -- (seventeen);
  \draw (one) -- (three);
\draw (fourteen) to (ten);
 \draw (one) to (thirteen);
 \draw (one) --(four);
 \draw (one) -- (five);
 \draw (one) -- (seven);
\draw (one) -- (eleven);
\draw (twenty) -- (ten);
 \draw (one) -- (two);
 \draw (fifteen) -- (six);
\draw (seven) -- (three);
\draw (four) -- (three);
\draw (four) -- (thirteen);
 \draw [thick,dotted,bend left](ten) to (eight);
\draw (six) -- (eight);
\draw (five) -- (two);
\draw (two) -- (fourteen);
\draw (two) -- (ten);
\draw (seven) -- (eleven);
\draw (two) -- (twenty);
\draw [bend right](twelve) to (one);
\draw [thick,dotted,bend right] (seven) -- (twelve);
\draw (fifteen) -- (eight);
\draw [thick,dotted,bend left](seventeen) to (twelve);
\draw (twelve) -- (eleven);
\draw (two) -- (twelve);
 \draw (twenty one) -- (two);
  \draw (twenty one) -- (twelve);
  \draw (twenty one) -- (seventeen);
  \draw [bend left](two) to (seventeen);
 \draw [thick,dotted,bend left](thirteen) to (five);
  \draw [thick,dotted,bend right](fourteen) to (five);
%    \draw [thick,dotted,bend left](nine) to (eight);
%   \draw (four) .. controls (-7.8,5.2) and (-2.5,6) ..(ten);
 \draw [thick,dotted] (-3.9,3) -- (-3.5,3);
  \draw [thick,dotted] (twenty) -- (fifteen);
  \draw [thick,dotted,bend right] (-5.8,3.6) -- (-5.5,3.5);
%   \draw [thick,dotted,bend right] (-5.5,0.5) -- (-5.0,0.3);
\end{tikzpicture}
\caption{$V'$ induces a tree}
\label{Eg4a}
%\caption{\label{first}Caption text.}
\end{figure}
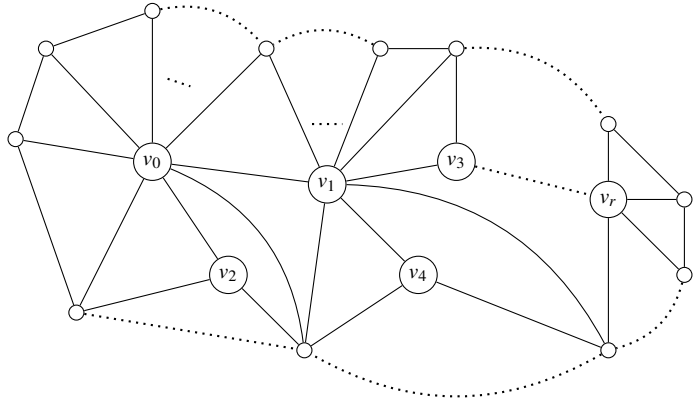
   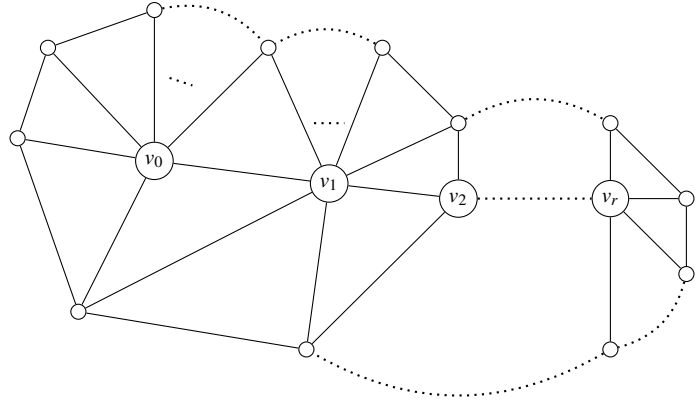
\begin{figure}[h]
\centering
\begin{tikzpicture} [every node/.style={draw,circle,inner sep=2pt,minimum size=2pt,font=\footnotesize},
  node distance=1.75cm,on grid]
 \tikzstyle{peers}=[draw,circle,black,bottom color=white,top color= white, text=white,minimum width=1pt]
 
 \node (one) at (-6,2.5) {$v_0$};
 \node (two) at (-3.7,2.2) {$v_1$};
 \node (three) at (-7.8,2.8) {};
 \node (four) at (-7.4,4) {};
 \node (thirteen) at (-6,4.5) {};
 \node (five) at (-4.5,4) {};
 \node (fourteen) at (-3.0,4) {};
\node (six) at (1,2) {};
 \node (seven) at (-7,0.5) {};
\node (eight) at (0,3) {};
%  \node (nine) at (-4.4,-0.4) {};
 \node (ten) at (-2,3) {};
 \node (eleven) at (-2,2) {$v_2$};
  \node (fifteen) at (0,2) {$v_r$};
 \node (twelve) at (-4,0) {};
 \node (sixteen) at (1,1) {};
 \node (seventeen) at (0,0) {};
  \draw (fifteen) -- (sixteen);
  \draw (six) -- (sixteen);
  \draw [thick,dotted,bend left] (sixteen) to (seventeen);
  \draw (fifteen) -- (seventeen);
  \draw (one) -- (three);
\draw (fourteen) to (ten);
 \draw (one) to (thirteen);
 \draw (one) --(four);
 \draw (one) -- (five);
 \draw (one) -- (seven);
 \draw (two) -- (seven);
\draw (eleven) [thick,dotted] to (fifteen);
 \draw (one) -- (two);
 \draw (fifteen) -- (six);
\draw (seven) -- (three);
\draw (four) -- (three);
\draw (four) -- (thirteen);
 \draw [thick,dotted,bend left](ten) to (eight);
\draw (six) -- (eight);
\draw (five) -- (two);
\draw (two) -- (fourteen);
\draw (two) -- (ten);
\draw (two) -- (eleven);
\draw (ten) -- (eleven);
% \draw (six) -- (eight);
\draw (seven) -- (twelve);
\draw (fifteen) -- (eight);
\draw [thick,dotted,bend left](seventeen) to (twelve);
\draw (twelve) -- (eleven);
\draw (two) -- (twelve);
% \draw (nine) -- (twelve);
 \draw [thick,dotted,bend left](thirteen) to (five);
  \draw [thick,dotted,bend right](fourteen) to (five);
%    \draw [thick,dotted,bend left](nine) to (eight);
%   \draw (four) .. controls (-7.8,5.2) and (-2.5,6) ..(ten);
 \draw [thick,dotted] (-3.9,3) -- (-3.5,3);
  \draw [thick,dotted,bend right] (-5.8,3.6) -- (-5.5,3.5);
%   \draw [thick,dotted,bend right] (-5.5,0.5) -- (-5.0,0.3);
\end{tikzpicture}
\caption{$V'$ induces a path}
\label{Eg4}
%\caption{\label{first}Caption text.}
\end{figure}
   
   \end{subfigures}
%   For $\forall i \in \{1,2,\ldots r-1\}$, 
%    $v_i-1$ and $v_i+1$ lies on the rim of $W_{v_i}$ and they are the only neighbours of $v_i$ that 
%    exists in $Int(C)$. That is all neighbours of $v_i$ except $v_{i-1}$ and $v_{i+1}$ exists on the cycle $C$. 
%    And $N(v_i) \cap N(v_{i-1}) = 2$. 
\end{proof}
The proof of Lemma~\ref{lem6} shows that if a $W_5$ free even W-near-triangulation $G$   
contains an odd hole, then the interior of the odd hole cannot be a tree, and 
hence must contain a facial triangle $uvw$. On the 
other hand, if three internal vertices $u,v$ and $w$ forms a facial triangle in a $W_5$ free even W-near-triangulation $G$,
by Lemma~\ref{lem5}, the neighbours of the facial triangle $uvw$ must induce an odd hole. 
Hence, we have the following computationally useful corollary.  
\begin{corollary}\label{cor:computation}
 Let $G$ be a $W_5$ free even W-near-triangulation.  The following conditions
 are equivalent.  
 \begin{enumerate}
  \item $G$ is not perfect.
  \item The subgraph induced by vertices of $Int(G)$ contains a facial triangle. 
  \item The subgraph induced by vertices of $Int(G)$ is not a tree.  
 \end{enumerate}
\end{corollary}
Lemma~\ref{lem5} and Lemma~\ref{lem6} yields the following local characterization
for perfect W-near-triangulations.  
\begin{theorem}\label{thm:3}
 A $W_5$ free plane triangulated W-near-triangulation $G$ other than a $K_4$ 
 is perfect if and only if the following conditions hold
 \begin{itemize}
  \item $G$ does not contain an even wheel
  \item $G$ does not contain an induced $W_{\Delta}$
 \end{itemize}
\end{theorem}
%\begin{proof}
% Let $G(V, E)$ be a $W_5$ free plane W-triangulated graph. If $G$ contains an induced even wheel or an induced $W_{\Delta}$, 
% then $G$ contains an induced odd hole and hence $G$ is not perfect. 
%
%Conversely, let $G$ is not perfect. Decompose $G$ in to W-components using the method discussed in Section~\ref{sec1}. $G$ is not perfect iff one of the $w$ components is not perfect. As $G$ is $W_5$ free, $w$ components are also $W_5$ free.A $W_5$ free  $w$near-triangulation is not perfect iff it contains an even wheel or an induced $W_{\Delta}$ (See Lemma~\ref{lem6}).
% Otherwise, let $\triangle = 
% (u,v,w)$ be a separating triangle in $G$. Then by the Observation~\ref{obs}, $(u,v,w)$ divides $G$ 
% into two components say $G_1$ and $G_2$ and either $G_1$ or $G_2$ or both are not perfect. 
% The claim follows by induction.
%\end{proof}
A simple linear time algorithm for checking whether a given $W_5$ free 
W-near-triangulation $G$ is perfect follows from Corollary~\ref{cor:computation} and Theorem~\ref{thm:3}, as explained below. We can find the vertices on the external face of $G$ in linear time using the method described in Section~\ref{wdecomposition} and create an array whose $i^{th}$ entry indicates whether the $i^{th}$ vertex is in $Int(G)$ or 
not, in linear time.  Now we can check whether any vertex in $Int(G)$ has 
odd degree, in which case, we immediately conclude that $G$ is not perfect.  
Otherwise, we perform a breadth first search on the subgraph induced 
by the vertices of $Int(G)$  (the indicator array serves to ensures that the search 
never enters a vertex on the boundary of $G$) in linear time to decide whether 
the subgraph induced by vertices of $Int(G)$ is a tree (Corollary~\ref{cor:computation}).  
% \begin{Note}
% ~
% \begin{enumerate}
%  \item  
%  \item Given an even W triangulation $G$ that is not perfect.  It might happen that the neighbours of none of the edges or facial
%  triangles in $G$ induces an odd hole, as shown in Figure~\ref{TO ADD}.    This check results in quadratic time complexity of the algorithm described in the next section.  
% \end{enumerate}
% \end{Note}
\section{Discussion and Conclusion}
Investigation into the structure of perfect plane-triangulations or plane 
near-triangulations has been reported in two unpublished manuscripts in the literature.  
In a work done prior to this paper by~\citet{benchetrit2015},
a structural characterization for perfect plane triangulations
that is not a local characterization is reported.  
The characterization does not appear to yield any direct algorithmic consequences.  
In a work  done subsequent to this paper by~\citet{sameera.arxiv}, 
a local characterization for W-near-triangulations has been derived
using a different proof technique, but the characterization is more 
complex than the one derived here for $W_5$ free W-near-triangulations and 
consequently yields only a quadratic time recognition algorithm for perfectness.  
It is interesting to check whether the approach presented here can be extended to 
all W-near-triangulations in a way to yield a linear time recognition algorithm 
for plane perfect near-triangulations.  
\section{Acknowledgment}
We thank Ajit A. Diwan, IIT Bombay and Jasine Babu, IIT Palakkad for discussions and suggestions. A preliminary version of this paper was presented in CALDAM 2019.
\bibliographystyle{plainnat}
\section*{References}
\bibliography{simple}
\end{document}